\documentclass[11pt]{article}
\usepackage{amsmath,amssymb,amsthm,geometry,hyperref}
\title{\textbf{Gain Bounds for Diagonal Superelliptic Equations under the Strong ABC Conjecture}}
\author{Karsten Müller}
\date{February 2026}

\newtheorem{theorem}{Theorem}
\newtheorem{definition}{Definition}

\begin{document}
\maketitle

\begin{abstract}
We establish a novel framework for bounding the adapted power gain $G_p$ and approximation gain $G_a$ of coprime integer solutions to the generalized diagonal superelliptic equation $By^n = Ax^n + k$ with $x, y \ge 2$. By first deriving a purely structural lower bound for $G_a$, we demonstrate that these equations are inherently predisposed to high ABC-qualities ($q = G_a \cdot G_p$). Combined with the Strong ABC conjecture ($q < q_{max}$), we prove that the power gain is uniformly bounded by $G_p < q_{max}/G_{a,min}$, providing a theoretical foundation for the numerical observation $G_p < 3$ for $n=2$ under the Ultra-Strong conjecture ($q < 1.5$). Specifically, we show that for $k=1$, the structural density forces $q > n/2$, which excludes solutions for $n \ge 4$ under $q < 2$. We validate our theoretical bounds using high-quality ABC triples, specifically analyzing the Reyssat (1987), de Weger (1985), and Nitaj (1993) cases to demonstrate the sharpness of the structural approximation gain.
\end{abstract}

\noindent

\textbf{Mathematics Subject Classification (2020):} 11D41, 11D61, 11J25, 11Y50 \\
\textbf{Keywords:} ABC conjecture, diagonal superelliptic equations, approximation gain, power gain, ABC quality, Reyssat triple, de Weger triple.

\section{Introduction}

This paper establishes uniform bounds for \(G_p\) and \(G_a\) adapted to this equation under a strong ABC conjecture for coprime triples. Previous theoretical and numerical studies by de Weger, Müller and Taktikos \cite{deweger2026abc,muller-taktikos} suggest that the approximation gain is bounded by 1.5 and the power gain by 3, motivating the exploration of bounds in special cases using the strong ABC conjecture.

To formulate the ABC Conjecture we need the following definition:

\begin{definition}[Radical]
For a positive integer $a$, $rad(a)$ is the product of the distinct prime factors of $a$. 
\end{definition}

\begin{definition}[ABC Conjecture]
For every positive real number $\varepsilon$, there exists a constant $K_{\varepsilon}$
such that for all triples $(a,b,c)$ of coprime positive integers,
with $a + b = c$:
\[c < K_{\varepsilon} \cdot rad(abc)^{1+\varepsilon}.\]
\end{definition}

Note that this version of the ABC Conjecture is not effective. We will use the strong effective version with $K_{\varepsilon} = K = 1$ and $\varepsilon = 1$

\begin{definition}[Generalized Diagonal Superelliptic Equation]
An integer quintuple \((x, y, A, B, k)\) satisfies the equation
\[
B y^n = A x^n + k
\]
with \(n \ge 2\), \(x, y \ge 2\), \(A, B \ge 1\), and $k \ge 1$, such that \(\gcd(Ax, By, k) = 1\).
\end{definition}

\begin{definition}[Approximation Gain \(G_a\)]
For a solution \((x, y, A, B, k)\), we define
\[
G_a := \frac{\ln(\max(Ax^n, By^n))}{\ln(xy AB k)}.
\]
\end{definition}

This is slightly different from de Weger's definition in \cite{deweger2026abc}. But it is always valid that it is smaller or equal to his. So the conjecture $G_a < 1.5$ is valid for the cases calculated by him.

\begin{definition}[Power Gain \(G_p\)]
For a solution \((x, y, A, B, k)\), we define
\[
G_p := \frac{\ln(xy AB k)}{\ln \mathrm{rad}(xy AB k)}.
\]
\end{definition}

This is also slightly different from de Weger's definition in \cite{deweger2026abc}. But it is always valid that it is larger or equal to his. And indeed the conjecture $G_p < 3$ is not valid here.

\begin{definition}[ABC-like quality \(q\)]
For a solution, we define
\[
q := \frac{\ln(\max(Ax^n, By^n))}{\ln \mathrm{rad}(xy AB k)} = G_a \cdot G_p.
\]
\end{definition}

\section{Structural Approximation Gain Bounds}

We first establish a lower bound for $G_a$ that depends only on the parameters of the superelliptic equation, independent of any conjectures.

\begin{theorem}[Structural Lower Bound for $G_a$]
Let $(x, y, A, B, k)$ be a positive integer solution to $By^n = Ax^n + k$ with $y \ge 2$ and $B y^n > A x^n$. Then:
\begin{equation}
G_a > \frac{1}{\frac{n+2}{n} + \frac{(n-1)\ln(AB)}{n(n \ln y + \ln B)}}
\end{equation}
\end{theorem}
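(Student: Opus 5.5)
The plan is to write $L := \ln(By^n) = n\ln y + \ln B$. By the hypothesis $By^n > Ax^n$, this is the numerator $\ln(\max(Ax^n,By^n))$ in the definition of $G_a$. Since $n\ge 2$, $\frac{n+2}{n}L>0$ and $\frac{(n-1)\ln(AB)}{n}\ge 0$, so the right-hand side of the claimed inequality equals $L/\bigl(\frac{n+2}{n}L + \frac{n-1}{n}\ln(AB)\bigr)$. It therefore suffices to prove
\[
\ln(xyABk) < \frac{n+2}{n}\,L + \frac{n-1}{n}\ln(AB),
\]
and then divide $L$ by both sides. Both sides are positive: $y\ge 2$ gives $\ln(xyABk)\ge \ln 2>0$ and $L>0$. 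Hence the inequality between the reciprocals follows directly from the definition of $G_a$.

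The estimate comes from bounding each of the three variable factors $x$, $y$, $k$ by $By^n$, one at a time.
\begin{itemize}
\item For $k$: from $k = By^n - Ax^n$ and $Ax^n\ge 1$ we get $k < By^n$, so $\ln k < L$. This is the step that produces strictness.
\item For $x$: from $Ax^n < By^n$ we get $n\ln x < L - \ln A$, so $\ln x < \frac{1}{n}(L-\ln A)$.
\item For $y$: exactly, $\ln y = \frac{1}{n}(L-\ln B)$.
\end{itemize}
Adding these to $\ln A + \ln B$ gives
\[
\ln(xyABk) < \frac{L-\ln A}{n} + \frac{L-\ln B}{n} + \ln(AB) + L = \frac{n+2}{n}L + \frac{n-1}{n}\ln(AB),
\]
which is exactly the required bound.

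There is no real obstacle; the argument is bookkeeping. The only points needing care are the following. The strict inequality must come from $k<By^n$, which uses positivity of $A$ and $x$. The direction of the inequality must be preserved when passing to reciprocals, which is why I first check that both $\ln(xyABk)$ and the bracketed denominator are positive. Finally, the hypothesis $By^n>Ax^n$ is what identifies the maximum in $G_a$ as $By^n$. The coprimality condition from the definition of the equation is not needed here.
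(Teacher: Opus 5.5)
Your proof is correct and is essentially the paper's argument: bound $k<By^n$ and $x$ via $Ax^n<By^n$, substitute $\ln y=\frac1n(\ln(By^n)-\ln B)$, collect terms, and take the ratio. Your explicit check that both $\ln(xyABk)$ and the bracketed denominator are positive before inverting fills in a step the paper leaves implicit.
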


\begin{proof}
Let $C = By^n$. By definition, $G_a = \ln C / \ln(xyABk)$. 
Since $k < C$ and $x \le (C/A)^{1/n}$, the product $P = xyABk$ satisfy:
\[ \ln P < \frac{1}{n}(\ln C - \ln A) + \ln y + \ln A + \ln B + \ln C \]
Using $\ln y = \frac{1}{n}(\ln C - \ln B)$, we collect terms:
\[ \ln P < \frac{n+2}{n} \ln C + \frac{n-1}{n} \ln(AB) \]
The result follows by taking the ratio $G_a = \ln C / \ln P$.
\end{proof}

\section{Power Gain Bounds under ABC Conjectures}

By combining the structural rigidity of $G_a$ with the global quality bounds from the ABC conjecture, we derive uniform bounds for the power gain $G_p$.

\begin{theorem}[Power Gain Bound]
Assuming the Strong ABC Conjecture ($q < q_{max}$), the power gain $G_p$ for any solution satisfies:
\begin{equation}
G_p < q_{max} \left( \frac{n+2}{n} + \frac{(n-1)\ln(AB)}{n(n \ln y + \ln B)} \right)
\end{equation}
\end{theorem}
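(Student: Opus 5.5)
The argument combines the factorization $q = G_a \cdot G_p$ with the Structural Lower Bound for $G_a$ (the previous theorem), so that the upper bound on $q$ from the Strong ABC Conjecture turns into an upper bound on $G_p$. Write
\[
D := \frac{n+2}{n} + \frac{(n-1)\ln(AB)}{n(n \ln y + \ln B)},
\]
so the previous theorem reads $G_a > 1/D$.

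\textbf{Step 1 (normalization).} I first reduce to the hypotheses of the previous theorem. Since $k \ge 1$, the equation $By^n = Ax^n + k$ forces $By^n > Ax^n$. Also $y \ge 2$ by the definition of a solution. Hence $\max(Ax^n, By^n) = By^n =: C$, and the previous theorem applies verbatim to give $G_a > 1/D$. I also check that $D$ is a positive real number: $n \ln y + \ln B \ge n\ln 2 > 0$ and $\ln(AB) \ge 0$, so $D \ge (n+2)/n > 0$. In particular $G_a > 0$ and division by $G_a$ is harmless.

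\textbf{Step 2 (positivity of the logarithms).} To divide legitimately, I check that $\ln \mathrm{rad}(xyABk) > 0$. This holds because $x, y \ge 2$, so the radical is at least $2$. Similarly $\ln(xyABk) > 0$. Hence $G_a$, $G_p$ and $q$ are well defined, positive, and satisfy the identity $q = G_a G_p$ (the $\ln(xyABk)$ factors cancel).

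\textbf{Step 3 (combination).} From $q < q_{max}$ I get $G_p = q/G_a < q_{max}/G_a$. Since $G_a > 1/D > 0$, taking reciprocals of positive quantities gives $1/G_a < D$. Therefore $G_p < q_{max} D$, which is exactly the stated bound.

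The only delicate point is Step 1: the statement applies to ``any solution'', so I must make sure the maximum in the definitions of $G_a$ and $q$ is always attained at $By^n$, and that the strictness of the inequalities survives. Both follow from $k \ge 1$ and the positivity established in Step 2. Everything else is a direct substitution.
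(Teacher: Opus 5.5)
Your proof is correct and is the paper's argument: write $G_p = q/G_a$, then use $q < q_{max}$ together with $1/G_a < D$ from the Structural Lower Bound (Theorem 1). Your extra checks are details the paper leaves implicit: that $k \ge 1$ forces $By^n > Ax^n$ so Theorem 1 applies, and that the relevant quantities (including $q_{max} > q > 0$) are positive so the strict inequalities survive division and multiplication.
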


\begin{proof}
Since $q = G_a \cdot G_p$, we have $G_p = q/G_a$. Substituting $q < q_{max}$ and the lower bound for $G_a$ from Theorem 1 yields the inequality.
\end{proof}

\begin{theorem}[Numerical Implications]
Under the Ultra-Strong ABC conjecture ($q < 1.5$), the power gain for $n=2$ and $A=B=1$ is strictly bounded by $G_p < 3.0$. For $n=3$, it satisfies $G_p < 2.5$.
\end{theorem}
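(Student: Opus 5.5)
The plan is to specialize Theorem 2 (the Power Gain Bound) to $A=B=1$ and $q_{max}=1.5$, and then evaluate the resulting constant for $n=2$ and $n=3$. No new estimate is needed; everything rests on Theorem 1 and Theorem 2.

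First I check that the hypotheses of Theorem 1, which Theorem 2 inherits, hold. The equation is $By^n = Ax^n + k$ with $k \ge 1$, so $By^n > Ax^n$ automatically, and $y \ge 2$ is part of the definition of a solution. With $A=B=1$ we have $\ln(AB)=0$. The correction term
\[
\frac{(n-1)\ln(AB)}{n(n\ln y + \ln B)}
\]
therefore vanishes. Its denominator is $n^2\ln y \ge n^2\ln 2 > 0$, so the term is well defined. Theorem 1 thus reduces to $G_a > \frac{n}{n+2}$, and Theorem 2 to
\[
G_p < q_{max}\cdot\frac{n+2}{n}.
\]

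Next I substitute $q_{max}=1.5$. For $n=2$ this gives $G_p < 1.5\cdot 2 = 3.0$. For $n=3$ it gives $G_p < 1.5\cdot \tfrac{5}{3} = 2.5$. Strictness comes from two strict inequalities: $G_a > n/(n+2)$ in Theorem 1, which follows from $k<C$ and $x<(C/A)^{1/n}$, and the assumed $q<q_{max}$. Since $G_p = q/G_a$ with $G_a>0$, both strict bounds pass to $G_p$.

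The only step requiring care is confirming that the strictness and sign conditions survive the specialization. That means checking $G_a>0$ and $\ln P>0$, which hold because $x,y\ge 2$ forces $P = xyk \ge 4$, so dividing by $G_a$ preserves the inequality. I do not expect any genuine obstacle; the statement is a direct numerical corollary of Theorem 2.
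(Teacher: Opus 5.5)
Your proposal is correct and follows the paper's intended argument; the paper states this theorem without a separate proof, as a direct corollary of Theorem 2: with $A=B=1$ the $\ln(AB)$ term vanishes, giving $G_p < q_{max}\cdot\frac{n+2}{n}$, which for $q_{max}=1.5$ yields $3.0$ at $n=2$ and $2.5$ at $n=3$. Your checks on the positivity of $\ln P$ and $G_a$ and on strictness are sound, and you rightly read the $n=3$ claim as also assuming $A=B=1$.
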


\section{Non-existence for k=1 with the typical technique using the gain and ABC bounds}

A remarkable consequence of these bounds is the immediate exclusion of solutions for higher exponents when $k=1$.

\begin{theorem}
For the equation $y^n = x^n + 1$ with $x,y \ge 2$, any solution must satisfy $q > n/2$. 
\end{theorem}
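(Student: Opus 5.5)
The plan is to bound the radical trivially and use $x<y$, with $A=B=k=1$. For a solution of $y^n=x^n+1$ with $x,y\ge 2$, we have $\max(Ax^n,By^n)=y^n$. Also $\mathrm{rad}(xyABk)=\mathrm{rad}(xy)$. So the definition of the ABC-like quality gives
\[
q=\frac{n\ln y}{\ln \mathrm{rad}(xy)}.
\]
The argument then needs only two elementary inequalities.

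First, $\mathrm{rad}(xy)\le xy$, since the radical divides the number. Second, $x<y$, because $y^n=x^n+1>x^n$ and $t\mapsto t^n$ is increasing on the positive reals. Combining these, $\ln\mathrm{rad}(xy)\le \ln x+\ln y<2\ln y$. Since $y\ge 2$ and $x\ge 2$, the quantity $\ln\mathrm{rad}(xy)$ is positive, because $xy>1$ has at least one prime factor. Dividing therefore gives
\[
q=\frac{n\ln y}{\ln\mathrm{rad}(xy)}>\frac{n\ln y}{2\ln y}=\frac n2,
\]
which is the claim. Alternatively, one could specialize Theorem 1 with $A=B=k=1$ to get $G_a>n/(n+2)$ and combine it with a lower bound on $G_p$. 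The direct estimate above is sharper and avoids the slack from the step $k<C$ in that proof, so I would use it.

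There is no real obstacle; the only care needed is the strictness and the positivity of the denominator, both settled above. I would also add a remark that the hypothesis is in fact never satisfied. If $y>x\ge 2$, then
\[
y^n-x^n\ge (x+1)^n-x^n\ge n x^{n-1}\ge 2n>1,
\]
so $y^n=x^n+1$ has no solutions with $x,y\ge 2$ for $n\ge 2$. The theorem is thus vacuously true as well. The inequality $q>n/2$ is still worth recording, since it shows how the gain-and-ABC technique of the following section would exclude $n\ge 4$ under $q<2$ without appealing to this elementary observation or to Mihailescu's theorem.
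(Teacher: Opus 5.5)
Your proof is correct and is essentially the paper's argument: you write $q = n\ln y/\ln\mathrm{rad}(xy)$, bound $\mathrm{rad}(xy)\le xy<y^2$ using $x<y$, and divide. Your two additions are also correct and more careful than the paper's version: the check that $\ln\mathrm{rad}(xy)>0$, and the observation that $(x+1)^n-x^n>1$ leaves no solutions with $x,y\ge 2$, so the statement holds vacuously.
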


\begin{proof}
Consider the triple $(x^n, 1, y^n)$. The ABC-quality is defined as:
\[ q = \frac{\ln(y^n)}{\ln \text{rad}(x^n \cdot 1 \cdot y^n)} = \frac{n \ln y}{\ln \text{rad}(xy)}. \]
Since $\text{rad}(xy) \le xy < y^2$ (as $x < y$), we have $\ln \text{rad}(xy) < 2 \ln y$. 
Substituting this yields:
\[ q > \frac{n \ln y}{2 \ln y} = \frac{n}{2}. \]
If the Strong ABC conjecture holds with $q < 2$, then $n/2 < 2 \implies n < 4$. 
Thus, no solutions exist for $n \ge 4$.
\end{proof}

Under the Strong ABC conjecture ($q < 2$), this implies no solutions exist for $n \ge 4$. Under the Ultra-Strong conjecture ($q < 1.5$), no solutions exist for $n \ge 3$.

\section{Lower Bound for the ABC-Quality}

A significant consequence of the structural lower bound for $G_a$ is that for diagonal superelliptic equations, the ABC-quality $q$ of the associated triple $(Ax^n, k, By^n)$ is bounded away from zero. 

\begin{theorem}[Lower Bound for $q$]
For any coprime solution to $By^n = Ax^n + k$ with $y \ge 2$, the ABC-quality $q$ satisfies:
\begin{equation}
q > \frac{n}{n+2 + \frac{(n-1)\ln(AB)}{\ln(By^n)}}
\end{equation}
\end{theorem}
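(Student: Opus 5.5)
The plan is to obtain the bound as a direct corollary of Theorem 1 (the structural lower bound for $G_a$), combined with the trivial fact that the power gain is never below $1$. Since $q = G_a \cdot G_p$ by definition, it suffices to show $G_p \ge 1$ and then to check that the expression in Theorem 1 is the same quantity as the right-hand side of the present statement.

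First I will verify that the hypotheses of Theorem 1 hold. Since $k \ge 1$, we have $By^n = Ax^n + k > Ax^n$. Also $y \ge 2$ by assumption. So Theorem 1 applies and gives
\[
G_a > \frac{1}{\frac{n+2}{n} + \frac{(n-1)\ln(AB)}{n(n\ln y + \ln B)}} .
\]

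Second, I will show $G_p \ge 1$. For any positive integer $m \ge 2$ we have $\mathrm{rad}(m) \le m$ and $\mathrm{rad}(m) \ge 2$. Taking $m = xyABk \ge y \ge 2$ gives $0 < \ln \mathrm{rad}(xyABk) \le \ln(xyABk)$, so $G_p$ is well defined and $G_p \ge 1$. Because $G_a > 0$, multiplying the strict inequality for $G_a$ by $G_p \ge 1$ yields $q = G_a G_p \ge G_a$, which is strictly larger than the Theorem 1 bound.

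Third comes the algebraic rewriting, which is the only real content. I use $n \ln y + \ln B = \ln(By^n)$ and multiply numerator and denominator of the Theorem 1 expression by $n$. This gives
\[
\frac{1}{\frac{n+2}{n} + \frac{(n-1)\ln(AB)}{n\ln(By^n)}} = \frac{n}{n+2 + \frac{(n-1)\ln(AB)}{\ln(By^n)}},
\]
which is exactly the claimed bound.

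I expect no serious obstacle. The one point needing care is the positivity of $\ln\mathrm{rad}(xyABk)$, which is needed so that $G_p$ is defined and the inequality direction is preserved; this is guaranteed by $y \ge 2$. The coprimality assumption is not actually needed for the inequality; it only matters for interpreting $q$ as the quality of the ABC triple $(Ax^n, k, By^n)$.
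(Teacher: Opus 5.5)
Your proposal is correct and follows essentially the same route as the paper's proof. Both derive $G_p \ge 1$ from $\mathrm{rad}(N)\le N$, conclude $q = G_aG_p \ge G_a$, and rewrite the Theorem 1 bound using $n\ln y + \ln B = \ln(By^n)$; your additional checks ($By^n > Ax^n$ from $k \ge 1$, and $\ln\mathrm{rad}(xyABk) > 0$ from $y \ge 2$) are details the paper leaves implicit.
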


\begin{proof}
By definition, the ABC-quality is given by $q = \frac{\ln(By^n)}{\ln \mathrm{rad}(xyABk)}$. 
We also have the definition of the power gain $G_p = \frac{\ln(xyABk)}{\ln \mathrm{rad}(xyABk)}$. 
Since $\text{rad}(N) \le N$ for any integer $N$, it follows that $G_p \ge 1$. 

Expressing $q$ as the product of the gains:
\[ q = G_a \cdot G_p \ge G_a \cdot 1 = G_a. \]
Substituting the structural lower bound for $G_a$ derived in the previous section:
\[ q > \frac{1}{\frac{n+2}{n} + \frac{(n-1)\ln(AB)}{n \ln(By^n)}} = \frac{n}{n+2 + \frac{(n-1)\ln(AB)}{\ln(By^n)}}. \]
For the standard case $A=B=1$, this simplifies to:
\[ q > \frac{n}{n+2}. \]
\end{proof}

\subsection{Asymptotic Behavior}
This result demonstrates that for a fixed exponent $n$, the quality $q$ cannot be arbitrarily small. For $n=2$, we have $q > 0.5$, and as $n \to \infty$, the quality must satisfy $q \to 1$. 

This feature of superelliptic equations explains why they are such fertile ground for finding high-quality ABC triples: the exponential structure $y^n$ provides a head start for the quality, ensuring it remains within a range where non-trivial radical properties (small $G_p$) can easily push $q$ beyond the threshold of $1.0$.

\section{Sharpness of Bounds: The Reyssat and de Weger Cases}

To evaluate the sharpness of the established gain bounds, we analyze two of the highest-quality known ABC triples:

\subsection{The Reyssat Triple (1987)}

This triple has the highest ABC quality discovered to date.

$2 + 3^{10} \cdot 109 = 23^5, q \approx 1.6299$
    \begin{itemize}
        \item Parameters: $n=5, A=109, x=9, y=23, k=2$
        \item $G_a \approx \mathbf{1.46283} < 1.5$, $G_p = q/G_a \approx \mathbf{1.114} < 3.0$.
    \end{itemize}

This shows that $G_a < 1.5$ is quite sharp.

\subsection{The de Weger Triple (1985)}

To evaluate the generalized bound $B y^n = A x^n + k$ for non-trivial solutions ($x, y \ge 2$), we analyze the prominent triple discovered by B. de Weger. This triple is particularly suitable as it requires the use of coefficients $A$ and $B$ to maintain a superelliptic form with $n=3$:

\[
23 \cdot 128^3 = 3087 \cdot 25^3 + 121
\]

The parameters for this solution are:
\begin{itemize}
    \item $n = 3, \quad y = 128, \quad x = 25, \quad B = 23$
    \item $A = 3^2 \cdot 7^3 = 3087$
    \item $k = 11^2 = 121$
\end{itemize}

The term product is $xyABk = 25 \cdot 128 \cdot 3087 \cdot 23 \cdot 121 \approx 2.7478 \times 10^{10}$. \\
The radical is $\mathrm{rad}(xyABk) = 2 \cdot 3 \cdot 5 \cdot 7 \cdot 11 \cdot 23 = 53,130$.

Calculating the gains with high precision yields:
\[
G_p = \frac{\ln(2.7478 \times 10^{10})}{\ln(53,130)} \approx \frac{24.0366}{10.8805} \approx \mathbf{2.2091}
\]
\[
G_a = \frac{\ln(23 \cdot 128^3)}{\ln(2.7478 \times 10^{10})} \approx \frac{17.6871}{24.0366} \approx \mathbf{0.7360}
\]

\subsubsection{Comparison with the Structural Bound}

We now evaluate the refined upper bound for $G_p$ using the synergistic framework established in Theorem 2. First, we determine the structural lower bound $G_{a,min}$ for this specific set of coefficients and parameters according to Theorem 1:

\[
G_{a,min} = \frac{1}{\frac{3+2}{3} + \frac{(3-1)\ln(3087 \cdot 23)}{3(3 \ln 128 + \ln 23)}} \approx \frac{1}{1.6667 + \frac{22.3416}{53.0748}} \approx \frac{1}{1.6667 + 0.4209} \approx \mathbf{0.4790}
\]

Applying the Strong ABC conjecture ($q < 2$) to the relation $G_p < q_{max}/G_{a,min}$, we obtain the following theoretical upper bound for the power gain:
\[
G_p < \frac{2}{0.4790} \approx \mathbf{4.1754}
\]

Even under the Ultra-Strong ABC conjecture ($q < 1.5$), the bound remains:
\[
G_p < \frac{1.5}{0.4790} \approx \mathbf{3.1315}
\]

The observed power gain of $G_p \approx 2.2091$ for the de Weger triple is well-contained within these bounds. This comparison highlights that while de Weger's discovery is an exceptional outlier in terms of ABC quality, its power gain is strictly constrained by the structural rigidity of the approximation gain $G_a$ for $n=3$. The triple effectively exploits the smoothness of the coefficients $A$ and $B$, but even this high degree of divisibility cannot overcome the limits imposed by the cubic growth of the terms.

\section{Sharpness and the Role of the Non-Triviality Condition}

To evaluate the sharpness of the above calculations of the generalized bounds, we distinguish between trivial solutions ($x=1$) and non-trivial solutions ($x \ge 2$). This distinction is crucial for the observed stability of the power gain $G_p$. And Benne de Weger's definition only works in this case for $x,y \ge 2$.

\subsection{The Nitaj Case: Why $x \ge 2$ is Necessary}

The triple discovered by A. Nitaj (1993) is often cited for its high quality $q \approx 1.58$. In our generalized framework, it can be represented with $x=1$ and a massive coefficient $A$:
\[
1 \cdot 2^{59} = (7^2 \cdot 41^2 \cdot 311^3) \cdot 1^{59} + (11^{16} \cdot 13^2 \cdot 79)
\]
Using the parameters $n=59, y=2, x=1, A \approx 2.47 \times 10^{12}, B=1$, we calculate:
\[
G_p = \frac{\ln(xyABk)}{\ln \mathrm{rad}(xyABk)} \approx \mathbf{3.2737}
\]

This value for $G_p$ exceeds the conjectured bound of $G_p < 3.0$ for non-trivial solutions. We now evaluate the theoretical upper bound for $G_p$ using the synergetic framework of Theorem 2. First, we determine the structural lower bound $G_{a,min}$ for these specific parameters:
\[
G_{a,min} = \frac{1}{\frac{59+2}{59} + \frac{58 \ln(2.47 \times 10^{12})}{59(59 \ln 2)}} \approx \frac{1}{1.0339 + 0.6859} \approx \mathbf{0.5815}
\]

Applying the Strong ABC conjecture ($q < 2$) to the relation $G_p < q_{max}/G_{a,min}$, we obtain the following theoretical limit:
\[
G_p < \frac{2}{0.5815} \approx \mathbf{3.4394}
\]

The Nitaj case sits at approximately 95.2\% of this theoretical limit. This demonstrates that if $x=1$ is permitted, the coefficient $A$ can absorb nearly the entire logarithmic volume of $y^n$, allowing $G_p$ to rise significantly. Therefore, the condition $x \ge 2$ is necessary to make de Weger's method work.

\section{Consequences of the Gain Bounds for Superelliptic Equations}

In this section, we analyze the structural implications of the established approximation gain bounds. Specifically, we demonstrate that a rigid upper bound on $G_a$ directly restricts the possible exponents $n$ for diagonal superelliptic equations with small constants $k$.

\section{Conclusion}

This study gives bounds for the gains $G_a$ and $G_p$ under the strong ABC conjecture for coprime solutions of diagonal superelliptic equations. But both conjectures remain open. Only assuming the ultra-strong ABC version ($q < 1.5$) $G_a < 1.5$ can be proved but this is trivial anyway. So deeper methods are needed or counterexamples found. And using the bounds for certain cases it can be shown that the equations have no solution.

\section{Acknowledgements}

I want to thank Michael Taktikos and Benne de Weger for their deep computations, valuable feedback and patience with me.


\begin{thebibliography}{9}

\bibitem{deweger2026abc}
B. M. M. de Weger,
\emph{On the approximation gain for $abc$-triples},
arXiv:2602.08051 [math.NT], 2026.

\bibitem{muller-taktikos}
K. Müller and M. Taktikos,
\emph{From ABC to effective Roth and Ridout constants for cubic roots},
arXiv:2601.11376 [math.NT], 2026.

\end{thebibliography}
\end{document}